
\documentclass[11pt]{amsart}
\addtolength{\oddsidemargin}{-.5in}
\addtolength{\evensidemargin}{-.5in}
\addtolength{\textwidth}{1.0in} 

\newtheorem{theorem}{Theorem}[section]
\newtheorem{lemma}[theorem]{Lemma}

\theoremstyle{definition}

\newtheorem{question}[theorem]{Question}
\theoremstyle{remark}
\newtheorem{remark}[theorem]{Remark}

\numberwithin{equation}{section}


\title [Quartic surfaces and Cremona transformations]{Quartic K3 surfaces and Cremona transformations}

\author{Keiji Oguiso}

\address{Keiji Oguiso, Department of Mathematics, Osaka University\\
Toyonaka 560-0043 Osaka, Japan and  Korea Institute for Advanced Study, Hoegiro 87, Seoul, 130-722, Korea} \email{oguiso@math.sci.osaka-u.ac.jp}

\subjclass[2010]{Priminary 14J28, (Secondary 14E07, 14J50)}

\thanks{supported by JSPS Gran-in-Aid (B) No 22340009, JSPS Grant-in-Aid (S), No 22224001, and by KIAS Scholar Program}

\begin{document}

\maketitle

\begin{abstract}
We prove that there is a smooth quartic K3 surface automorphism that is not derived from the Cremona transformation of the ambient three-dimensional projective space. This gives a negative answer to a question of Professor Marat 
Gizatullin. 
\end{abstract}

\section{Introduction}
Throughout this note, we work over the complex number field ${\mathbf C}$. 

In his lecture ``Quartic surfaces and Cremona transformations" in the workshop on Arithmetic and Geometry of K3 surfaces and Calabi-Yau threefolds held at the Fields Institute (August 16-25, 2011), Professor Igor Dolgachev discussed the following question with several beautiful examples supporting it:

\begin{question}\label{dolgachev}
Let $S \subset {\mathbf P}^3$ be a smooth quartic K3 surface. 
Is any biregular automorphism $g$ of $S$ (as abstract variety) derived from a Cremona transformation of the ambient space ${\mathbf P}^3$? More precisely, 
is there a birational automorphism $\tilde{g}$ of ${\mathbf P}^3$ such that 
$\tilde{g}_*(S) =S$ and $g = \tilde{g} \vert S$? 
Here $\tilde{g}_*(S)$ is the proper transform of $S$ and $\tilde{g} \vert S$ 
is the, necessarily biregular, birational automorphism of $S$ then induced by 
$\tilde{g}$. 
\end{question}

Later, Dolgachev pointed out to me that, to his best knowledge, Gizatullin was the first who asked this question. The aim of this short note is to give a negative answer to the question:

\begin{theorem}\label{main}

(1) There exists a smooth quartic K3 surface 
$S \subset {\mathbf P}^3$ of Picard number $2$ such that 
${\rm Pic}\, (S) = {\mathbf Z}h_1 \oplus {\mathbf Z}h_2$ with intersection 
form:
$$((h_i.h_j)) = 
\left(\begin{array}{rr}
4 & 20\\
20 & 4\\
\end{array} \right)\,\, .$$

(2) Let $S$ be as above. Then ${\rm Aut}\,(S)$ has an element $g$ 
such that it is of infinite order and $g^*(h) \not= h$. Here ${\rm Aut}\,(S)$ is the group of biregular automorphisms of $S$ as an abstract variety and $h \in {\rm Pic}\,(S)$ is the hyperplane section class. 

(3) Let $S$ and $g$ be as above. Then there is no element $\tilde{g}$ of ${\rm Bir}\, ({\mathbf P}^3)$ such that $\tilde{g}_{*}(S) = S$ and $g = \tilde{g} \vert S$. Here ${\rm Bir}\, ({\mathbf P}^3)$ is the Cremona group of 
${\mathbf P}^3$, i.e., the group of birational autmorphisms of ${\mathbf P}^3$. \end{theorem}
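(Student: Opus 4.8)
The plan is to handle the three statements in turn: (1) and (2) reduce to lattice theory together with the surjectivity of the period map and the global Torelli theorem, while (3) is a geometric argument about the divisors contracted by the hypothetical Cremona map. For (1), I would first record two arithmetic facts about the lattice $L$ with Gram matrix $\left(\begin{smallmatrix}4 & 20\\20 & 4\end{smallmatrix}\right)$. Writing $v=a h_1+b h_2$ gives $v^2=4(a^2+10ab+b^2)$, so every class has self-intersection in $4\mathbf Z$; hence there is no $(-2)$-class, and $v^2=0$ would require $a^2+10ab+b^2=0$, impossible since the discriminant $96$ is not a square. Thus $L$ has signature $(1,1)$, $\det L=-384$, and contains neither isotropic nor $(-2)$-vectors. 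By Nikulin's existence theorem $L$ admits a primitive embedding into the K3 lattice $\Lambda=U^{\oplus 3}\oplus E_8(-1)^{\oplus 2}$; choosing a very general period in $L^\perp\otimes\mathbf C$ and applying surjectivity of the period map produces a K3 surface $S$ with $\mathrm{Pic}(S)\cong L$, which is projective because $L$ represents positive numbers. Since $S$ has no $(-2)$-curves the ample cone is a full component of the positive cone, so after replacing $h_1$ by $-h_1$ if needed $h_1$ is ample; and as there are no $(-2)$-curves, no isotropic classes, and $h_1$ is primitive, Saint-Donat's criterion shows $h_1$ is very ample with $h_1^2=4$. Hence $|h_1|$ embeds $S$ as a smooth quartic with $h=h_1$.

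For (2), the isometries of $L$ are the $M\in\mathrm{GL}_2(\mathbf Z)$ with $M^tGM=G$, i.e.\ the automorphisms of the indefinite binary form $a^2+10ab+b^2$; as its discriminant $96$ is not a square, this group is infinite, generated up to sign by the hyperbolic element $M=\left(\begin{smallmatrix}0 & -1\\1 & 10\end{smallmatrix}\right)$ coming from the fundamental solution $(t,u)=(10,1)$ of $t^2-96u^2=4$. Its eigenvalues $5\pm2\sqrt6$ are positive, so $M$ preserves the chosen component of the positive cone, hence the ample cone. A suitable power $M^k$ acts as $\pm\mathrm{id}$ on the finite discriminant group $L^*/L$, and, $S$ being very general, the only Hodge isometries of the transcendental lattice are $\pm\mathrm{id}$; so $M^k$ glues to a Hodge isometry of $\Lambda$ preserving $\mathrm{Pic}(S)$ and the ample cone, whence $M^k=g^*$ for some $g\in\mathrm{Aut}(S)$ by global Torelli. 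Being hyperbolic, $M^k$ fixes no nonzero class, so $g$ has infinite order and $g^*(h)\neq h$ (concretely $g^*(h_1)=h_2$ when $k=1$).

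For (3), suppose $\tilde g\in\mathrm{Bir}(\mathbf P^3)$ satisfies $\tilde g_*(S)=S$ and $\tilde g\vert_S=g$. Restricting the homaloidal linear system of $\tilde g$ to $S$ gives $g^*(h)=d\,h-B$, where $d=\deg\tilde g$ and $B\geq 0$ is the divisorial part on $S$ of the base scheme of $\tilde g$. If $d=1$ then $\tilde g\in\mathrm{PGL}_4$ and $g^*(h)=h$, contradicting (2); so $d\geq 2$. (Since $S$ has no $(-2)$-curves, every effective class has non-negative self-intersection, so $B^2\geq 0$, which in fact forces $d$ to be large; but only $d>1$ is needed below.) As $\mathbf P^3$ carries no nontrivial pseudo-automorphism, the non-isomorphism $\tilde g$ must contract some prime divisor $D$. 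It cannot be contracted to a point: otherwise $g$ would contract the curve $D\cap S$, of class $(\deg D)h\neq 0$, to a point, impossible for the biregular $g$. Hence $D$ is contracted to a curve $W$, and since $g$ maps $S$ into $S$ and a general fibre meets $S$ off the base locus we get $W\subseteq S$. Sweeping $D$ by its rational fibres $R_w=\overline{\tilde g^{-1}(w)}$ and intersecting with $S$, a general $R_w$ meets $S$ in $4\deg R_w\geq 4$ points, of which at most one lies off the base locus of $\tilde g$ (by injectivity of $g$). Thus $\tilde g$ has a base curve $\Gamma_0\subset S$ that is a multisection of $D\to W$, while the residual section $\gamma_0\subset D\cap S$ is carried isomorphically onto $W$ by $g$.

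The main obstacle is to turn this configuration into a contradiction, and this is where the special geometry of $S$ is decisive: every irreducible curve on $S$ has self-intersection at least $4$ and arithmetic genus at least $3$, so $S$ contains no rational and no elliptic curve. I expect the contradiction to follow by combining two ingredients on $S$: the intersection-theoretic identity $4(h_2-h_1)=A_+-A_-$ obtained by restricting the relative-canonical relation $4(q^*H-p^*H)=\sum_j b_jF_j-\sum_i a_iE_i$ (on a resolution $p,q\colon W\to\mathbf P^3$ of $\tilde g$) to the strict transform of $S$, with $A_\pm$ effective; and the classical numerical constraints on the degrees and genera of the base and contracted curves of a degree-$d$ Cremona transformation of $\mathbf P^3$. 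The forced configuration, in which $\Gamma_0$, $W$ and $\gamma_0$ are curves of genus $\geq 3$ linked by a ruling with rational fibres, should violate these constraints. Making this final step precise is the crux of the proof; everything preceding it is formal.
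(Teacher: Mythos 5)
Your treatment of (1) and (2) is correct and follows essentially the same route as the paper: Nikulin's embedding theorem plus surjectivity of the period map to realize the lattice as $\mathrm{Pic}(S)$, the absence of $0$- and $(-2)$-classes to identify the ample cone with the positive cone, Saint-Donat's criterion for very ampleness of $h_1$, and the global Torelli theorem applied to a power of the hyperbolic isometry glued with the identity on the transcendental lattice. Your shortcut for the linear case of (3) (if $\tilde g\in\mathrm{PGL}_4$ then $g^*h=h$, contradicting the property of $g$ from (2)) is also valid for the statement at hand, and is even simpler than the paper's argument, which instead proves finiteness of the stabilizer of $[S]$ in $\mathrm{PGL}_4$ so as to cover arbitrary infinite-order automorphisms.

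The genuine gap is the case where $\tilde g$ is not an isomorphism, and you concede it yourself: after setting up the contracted divisor $D$, its image curve, and the base curve on $S$, you say the contradiction ``should'' follow from the relative-canonical identity and ``classical numerical constraints'' on Cremona transformations, and that making this precise is ``the crux.'' That crux is not a formal verification; no usable classification of base/contracted curves exists for Cremona transformations of arbitrary degree $d$, and extracting a contradiction from such a configuration is exactly the content of the Noether--Fano/Sarkisov machinery. The paper fills this hole with a substantial external input, Takahashi's theorem \cite{Ta} (Theorem \ref{logsarkisov}): if every irreducible reduced curve $C\subset S$ with $(C\cdot H)_{\mathbf P^3}<16$ is cut out on $S$ by a hypersurface, then any birational non-isomorphism $\Phi:\mathbf P^3\dashrightarrow X'$ to a $\mathbf Q$-factorial terminal variety of Picard number one makes $K_{X'}+\Phi_*S$ ample; taking $X'=\mathbf P^3$, $\Phi_*S=S$ contradicts $K_{\mathbf P^3}+S=0$. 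Feeding this theorem requires the second ingredient your sketch never isolates: the degree-$16$ threshold, verified for this lattice in Lemma \ref{num} --- writing $C=mh_1+nh_2$, the conditions $(C\cdot h)_S<16$ and $(C^2)_S>0$ force $n=0$, so $C=mh$ and surjectivity of $H^0(\mathbf P^3,\mathcal O_{\mathbf P^3}(m))\to H^0(S,\mathcal O_S(m))$ finishes. This is precisely where the special choice of intersection matrix enters part (3), and it is absent from your argument. A smaller but real flaw in the portion you did write: you rule out $D$ being contracted to a point by saying $g$ would then contract the curve $D\cap S$, but this inference fails if $D\cap S$ is entirely contained in the base locus of $\tilde g$, where the restriction $\tilde g|_S=g$ carries no pointwise information; that case would need a separate argument.
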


Our proof is based on a result of Takahashi concerning the 
log Sarkisov program 
(\cite{Ta}), which we quote as Theorem (\ref{logsarkisov}), and standard argument concerning K3 surfaces. 

\begin{remark}\label{dimension}

(1) Let $C \subset {\mathbf P}^2$ be a smooth cubic curve, i.e., a smooth 
curve of genus $1$. It is classical that 
any element of ${\rm Aut}\, (C)$ is derived from a Cremona transformation of 
the ambient space ${\mathbf P}^2$. In fact, this follows from the fact that 
any smooth cubic curve is written in Weierstrass form after a linear change of 
coordinates and the explicit form of the group law 
in terms of the coordinates.

(2) Let $n$ be an integer such that $n \ge 3$ and $Y \subset {\mathbf P}^{n+1}$ be a smooth hypersurface of degree $n+2$. Then $Y$ is an $n$-dimensional Calabi-Yau manifold. It is well-known that ${\rm Bir}\,(Y) = {\rm Aut}\, (Y)$, it is a finite group and any element of ${\rm Aut}\, (Y)$ is derived from a biregular automorphism of the ambient space ${\mathbf P}^{n+1}$. In fact, the statement follows from $K_Y = 0$ in ${\rm Pic}\, (Y)$ (adjunction formula), $H^0(T_Y) = 0$ 
(by $T_Y \simeq \Omega_Y^{n-1}$ together with Hodge symmetry), and ${\rm Pic}\, (Y) = {\mathbf Z} h$, where $h$ is the hyperplane class (Lefschetz hyperplane section theorem). We note that $K_Y = 0$ implies that any birational automorphism of $Y$ is an isomorphism in codimension one, so that for any birational automorphism $g$ of $Y$, we have a well-defined group 
isomorphism $g^*$ on ${\rm Pic}\, (Y)$. Then $g^*h = h$. This implies that 
$g$ is biregular and it is derived from an element of ${\rm Aut}\, ({\mathbf P}^{n+1}) = {\rm PGL}\, ({\mathbf P}^{n+1})$. 

\end{remark}

{\bf Acknowledgement.} I would like to express my thank to Professor Igor Dolgachev for his kind explanation on this question with history, and the organizers of the workshop ``Arithmetic and Geometry of K3 surfaces and Calabi-Yau threefolds" held at the Fields Institute (August 16-25, 2011) for an invitation with full financial support. I would like to express my thanks to the referee for many valuable comments including the simplification of the proof of Lemma (\ref{autom}). 

\section{Proof of Theorem (\ref{main}) (1), (2)}

In this section, we shall prove Theorem (\ref{main})(1)(2) by dividing it into several steps. The last lemma (Lemma (\ref{num})) will be used also in the proof of Theorem (\ref{main}) (3). 

\begin{lemma}\label{morrison} There is a projective K3 surface such that ${\rm Pic}\, (S) = {\mathbf Z}h_1 \oplus {\mathbf Z}h_2$ with 
$$((h_i.h_j)) = 
\left(\begin{array}{rr}
4 & 20\\
20 & 4\\
\end{array} \right)\,\, .$$
\end{lemma}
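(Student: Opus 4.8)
The plan is to realize the given rank-two lattice as the N\'eron--Severi lattice of a projective K3 surface by combining elementary lattice theory with the surjectivity of the period map. Write $L$ for $\mathbf{Z}^2$ equipped with the Gram matrix $\left(\begin{array}{rr} 4 & 20 \\ 20 & 4 \end{array}\right)$. First I would record the numerical features of $L$: both diagonal entries are even, so $L$ is an even lattice; its eigenvalues are $24$ and $-16$, so $L$ has signature $(1,1)$ and is in particular hyperbolic; and $\det L = 16 - 400 = -384 \neq 0$, so $L$ is nondegenerate. These are precisely the numerical constraints that the N\'eron--Severi lattice of a projective K3 surface of Picard number $2$ must satisfy.

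Next I would embed $L$ primitively into the K3 lattice $\Lambda = U^{\oplus 3} \oplus E_8(-1)^{\oplus 2}$, the even unimodular lattice of signature $(3,19)$. Because $\mathrm{rank}\, L = 2$ is small (in particular $\le 10$), Nikulin's criterion for the existence of a primitive embedding of an even lattice into an even unimodular lattice is automatically satisfied once the signatures are compatible, i.e. once $(1,1)$ fits into $(3,19)$. This yields a primitive embedding $L \hookrightarrow \Lambda$ whose orthogonal complement $T := L^{\perp}$ is an even lattice of signature $(2,18)$.

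The heart of the argument is then the surjectivity of the period map for K3 surfaces together with a genericity (Baire category) argument. I would consider the period domain $\Omega_T = \{[\omega] \in \mathbf{P}(T \otimes \mathbf{C}) : \omega\cdot\omega = 0,\ \omega\cdot\bar\omega > 0\}$ and choose a very general point $[\omega] \in \Omega_T$, that is, one lying outside the countable union, over the nonzero $v \in T$, of the proper hyperplanes $\{[\omega] \in \Omega_T : \omega\cdot v = 0\}$. For such $\omega$ there is no nonzero $v \in T$ orthogonal to $\omega$, and a short computation using that $L \oplus T$ has finite index in $\Lambda$ and that $L$ is primitive then gives $\{x \in \Lambda : x\cdot\omega = 0\} = L$ exactly. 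By the surjectivity of the period map there is a K3 surface $S$ with a marking, i.e. an isometry $\phi : H^2(S,\mathbf{Z}) \to \Lambda$, carrying $H^{2,0}(S)$ to $\mathbf{C}\omega$; then $\mathrm{NS}(S) = \phi^{-1}(\{x \in \Lambda : x\cdot\omega = 0\}) = \phi^{-1}(L) \cong L$. Finally, since $L$ has signature $(1,1)$ it contains a class of positive self-intersection, so $S$ carries a big line bundle and is therefore projective.

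The step I expect to require the most care is the genericity argument guaranteeing that $\mathrm{NS}(S)$ equals $L$ rather than some strictly larger overlattice: one must verify that the locus in $\Omega_T$ where an extra integral $(1,1)$-class appears is a countable union of proper closed analytic subsets, so that a very general $[\omega]$ avoids all of them; this is exactly where the countability of $\Lambda$ and the nondegeneracy of the intersection form enter. The embedding step is routine but should be stated with the precise form of the Nikulin--Morrison existence theorem being invoked, and one should remember to confirm projectivity from the presence of a positive class.
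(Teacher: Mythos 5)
Your proposal is correct and takes essentially the same route as the paper: the paper merely observes that the lattice is even of rank $2$ and signature $(1,1)$ and then cites Morrison's Corollary 2.9, whose proof is exactly the argument you reconstruct (Nikulin's embedding theorem to embed $L$ primitively in the K3 lattice, surjectivity of the period map at a very general period point of $L^{\perp}$, and projectivity from a class of positive square). All of your steps --- evenness, signature, the primitive embedding, the genericity argument pinning down $\mathrm{NS}(S) = L$ exactly, and the passage from a positive class to projectivity --- are sound, so the only difference is that you prove Morrison's result in detail where the paper invokes it as a citation.
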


\begin{proof} Note that the abstract lattice given by the symmetric matrix above is an even lattice of rank $2$ with signature $(1,1)$. Hence the result follows from \cite{Mo}, Corollary (2.9), which is based on the surjectivity of the period map for K3 surfaces (see e.g. \cite{BHPV}, Page 338, Theorem 14.1) and Nikulin's theory (\cite{Ni}) of integral bilinear forms. 
\end{proof}

{\it From now on, $S$ is a K3 surface in Lemma (\ref{morrison}).}

Note that 
the cycle map $c_1 : {\rm Pic}\, (S) \rightarrow {\rm NS}\, (S)$ 
is an isomorphism for a K3 surface. 
So, we identify these two spaces. ${\rm NS}\,(S)_{\mathbf R}$ is 
${\rm NS}\,(S) \otimes_{\mathbf Z} {\mathbf R}$. 
The positive cone $P(S)$ of $S$ is the connected component of the set 
$$\{x \in {\rm NS}\, (S)_{\mathbf R}\, \vert\, (x^2)_S > 0 \}\,\, ,$$ 
containing the ample classes. The ample cone ${\rm Amp}\, (S) \subset 
{\rm NS}\, (S)_{\mathbf R}$ of $S$ is the 
open convex cone generated by the ample classes. 

\begin{lemma}\label{rational curve} ${\rm NS}\,(S)$ represents neither $0$ nor $-2$. In particular, $S$ has no smooth rational curve and no smooth elliptic curve and $(C^2)_S > 0$ for all non-zero effective curves $C$ in $S$. 
In particular, the positive cone of $S$ coincides with the ample cone of $S$.
\end{lemma}

\begin{proof} We have $((xh_1 + yh_2)^2)_S = 4(x^2 + 10xy + y^2)$. Hence there is no $(x, y) \in {\mathbf Z}^2$ such that $((xh_1 + yh_2)^2)_S \in 
\{-2, 0\}$.
\end{proof} 

\begin{lemma}\label{veryample} After replacing $h_1$ by $-h_1$, the line bundle $h_1$ is very ample. In particular, $\Phi_{\vert h_1 \vert} : S \rightarrow {\mathbf P}^3$ is an isomorphism onto a smooth quartic surface.
\end{lemma}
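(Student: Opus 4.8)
The plan is to first normalize the sign of $h_1$, then compute $h^0(h_1)$ by Riemann--Roch, and finally establish very ampleness by invoking Saint-Donat's analysis of complete linear systems on K3 surfaces and checking that none of his exceptional configurations can occur for our very specific intersection form.

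First I would fix the sign. Since $(h_1^2)_S = 4 > 0$, the class $h_1$ lies in $\{x \in {\rm NS}\,(S)_{\mathbf R} \,\vert\, (x^2)_S > 0\}$, whose two connected components are $P(S)$ and $-P(S)$. Exactly one of $\pm h_1$ lies in the positive cone $P(S)$; after replacing $h_1$ by $-h_1$ if necessary, I may assume $h_1 \in P(S)$. By Lemma \ref{rational curve} we have $P(S) = {\rm Amp}\,(S)$, so $h_1$ is ample. Next I would pin down the dimension of the system: as $K_S = 0$, Riemann--Roch gives $\chi(h_1) = (h_1^2)_S/2 + 2 = 4$, and since $h_1$ is ample we have $h^1(h_1) = h^2(h_1) = 0$ by Kodaira vanishing, whence $h^0(h_1) = 4$, i.e. $\dim \vert h_1 \vert = 3$ and $\Phi_{\vert h_1 \vert}$ maps $S$ into ${\mathbf P}^3$.

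The heart of the argument is very ampleness, for which I would quote Saint-Donat's criterion. For an ample line bundle $L$ with $(L^2)_S \ge 4$ on a K3 surface, $L$ is very ample unless one of the following exceptional configurations occurs: (i) there is an irreducible curve $E$ with $(E^2)_S = 0$ (a smooth elliptic curve) and $(L \cdot E)_S \in \{1, 2\}$, in which case $\vert L \vert$ acquires base points or $\Phi_{\vert L \vert}$ is generically two-to-one; or (ii) there is a curve $B$ with $(B^2)_S = 2$ and $L \equiv 2B$, again forcing $\Phi_{\vert L \vert}$ to be two-to-one onto its image. By Lemma \ref{rational curve} the surface $S$ carries no smooth elliptic curve, so (i) cannot hold; and since $((xh_1 + yh_2)^2)_S = 4(x^2 + 10xy + y^2) \in 4{\mathbf Z}$ for all integers $x, y$, no class $B$ satisfies $(B^2)_S = 2$, so (ii) is impossible as well. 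Hence $h_1$ is very ample. The delicate point requiring care is to reproduce Saint-Donat's exceptional list completely and correctly; in particular the hyperelliptic case $L \equiv 2B$ with $(B^2)_S = 2$ must not be overlooked, and it is precisely the divisibility of the intersection form by $4$ that disposes of it.

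Finally, since $h_1$ is very ample, $\Phi_{\vert h_1 \vert}$ is a closed embedding of $S$ into ${\mathbf P}^3$, and the image is a smooth surface of degree $(h_1^2)_S = 4$, that is, a smooth quartic K3 surface, as asserted.
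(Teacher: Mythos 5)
Your proposal is correct and follows essentially the same route as the paper: the paper likewise normalizes the sign of $h_1$ using Lemma (\ref{rational curve}) together with $(h_1^2)_S = 4 > 0$ to get ampleness (and absence of fixed components), and then invokes Saint-Donat's theorem to conclude very ampleness. Your Riemann--Roch computation and the explicit verification that Saint-Donat's exceptional configurations are excluded by the lattice (no classes of square $0$, $-2$, or $2$) simply unpack what the paper delegates to the citation of \cite{SD}, Theorem 6.1.
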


\begin{proof} $h_1$ is non-divisible in ${\rm Pic}\, (S)$ by construction. 
It follows from Lemma (\ref{rational curve}) and $(h_1^2)_S = 4 >0$ 
that one of $\pm h_1$ is ample with no fixed component. By replacing $h_1$ by $-h_1$, we may assume that it is $h_1$. Then, by \cite{SD}, Theorem 6.1, $h_1$ is a very ample line bundle with the last assertion.
\end{proof}

By Lemma (\ref{veryample}), {\it we may and will 
assume that $S \subset {\mathbf P}^3$ 
and denote this inclusion by $\iota$, and a general hyperplane section by $h$. 
That is, $h = H \cap S$ for a general hyperplane $H \subset {\mathbf P}^3$, from now on.} Note that $h = h_1$ in ${\rm Pic}\,(S)$. 

\begin{lemma}\label{autom}
There is an automorphism $g$ of $S$ such that $g$ is of infinite order 
and $g^*(h) \not= h$ in ${\rm Pic}\, (S)$.
\end{lemma}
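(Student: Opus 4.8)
The plan is to realize $g$ through the Torelli theorem for K3 surfaces: first produce a suitable isometry of the lattice $\mathrm{NS}(S)$, then extend it to a Hodge isometry of $H^2(S,\mathbf{Z})$ and read off the automorphism it induces.

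\emph{An infinite-order lattice isometry.} Since $\mathrm{NS}(S)$ has signature $(1,1)$, its isometry group is infinite and is governed by the Pell equation attached to the form. From $((xh_1+yh_2)^2)_S = 4(x^2+10xy+y^2)$, the isometries of $\mathrm{NS}(S)$ are exactly the integral automorphs of $x^2+10xy+y^2$, whose discriminant is $96$; these correspond to solutions of $t^2-96u^2=4$. The fundamental solution $(t,u)=(10,1)$ produces an isometry $\phi$ with, in the basis $(h_1,h_2)$,
$$\phi(h_1) = h_2\,, \qquad \phi(h_2) = -h_1 + 10 h_2\,.$$
One checks directly that $\phi$ preserves the intersection form. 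Its characteristic polynomial is $\lambda^2-10\lambda+1$, with irrational roots $5\pm 2\sqrt{6}$, so $\phi$ has infinite order, and $\phi(h)=\phi(h_1)=h_2\neq h_1=h$.

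\emph{Preservation of the ample cone.} By Lemma (\ref{rational curve}) the surface $S$ carries no $(-2)$-class, so the ample cone equals the positive cone $P(S)$, the component of $\{x^2>0\}$ containing $h_1$. Because $(h_2^2)_S=4>0$ and $(h_1.h_2)_S=20>0$, the class $h_2=\phi(h_1)$ again lies in $P(S)$; hence $\phi$, and therefore each power $\phi^n$, maps $\mathrm{Amp}(S)=P(S)$ onto itself.

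\emph{Extension and Torelli.} To obtain an automorphism I would extend a suitable power of $\phi$ to a Hodge isometry of the full lattice $H^2(S,\mathbf{Z})$. Let $T=\mathrm{NS}(S)^{\perp}$ be the transcendental lattice, which carries $H^{2,0}(S)$. By Nikulin's glueing theory, the isometry $\phi\oplus\mathrm{id}_T$ of $\mathrm{NS}(S)\oplus T$ extends to an isometry of the unimodular lattice $H^2(S,\mathbf{Z})$ precisely when $\phi$ acts trivially on the discriminant group $A=\mathrm{NS}(S)^*/\mathrm{NS}(S)$, so that it is compatible with $\mathrm{id}_T$ on $A_T\cong A$. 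As $A$ is finite, the image of $O(\mathrm{NS}(S))$ in $O(A)$ is finite, so there is $n\geq 1$ with $\phi^n$ acting trivially on $A$. Then $\Phi:=\phi^n\oplus\mathrm{id}_T$ extends to an isometry of $H^2(S,\mathbf{Z})$; it fixes $H^{2,0}(S)\subset T_{\mathbf{C}}$, hence is a Hodge isometry, and by the previous step it preserves the ample (Kähler) cone. The Torelli theorem then yields $g\in\mathrm{Aut}(S)$ with $g^*=\Phi$, so that $g^*|_{\mathrm{NS}(S)}=\phi^n$. Since $\phi^n$ has eigenvalues $(5\pm2\sqrt{6})^n\neq 1$ for $n\neq 0$, it fixes no nonzero class; thus $g^*(h)=\phi^n(h)\neq h$ and $g$ has infinite order.

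The main obstacle is the extension step: an isometry of $\mathrm{NS}(S)$ preserving the ample cone need not itself lift to $H^2(S,\mathbf{Z})$, and the discriminant-group obstruction is exactly what forces the passage to a power $\phi^n$. Everything else—the Pell computation, the cone check, and the triviality of the Weyl group—is routine once Lemma (\ref{rational curve}) is available.
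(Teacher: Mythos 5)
Your proposal is correct and follows essentially the same route as the paper: an explicit infinite-order isometry of ${\rm NS}\,(S)$ preserving the ample cone (your $\phi$ is just the inverse of the paper's $\sigma$), a power acting trivially on the discriminant group, Nikulin's extension glued with ${\rm id}_{T(S)}$, and the global Torelli theorem. Your eigenvalue argument showing $\phi^n$ fixes no nonzero class is a welcome extra precision (the paper leaves this implicit, and in fact misprints the eigenvalue as $5+4\sqrt{6}$ rather than $5+2\sqrt{6}$), but the structure of the proof is identical.
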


There are several ways to prove this fact. The following simpler proof was suggested by the referee.

\begin{proof} Let us consider the following orthogonal transformation $\sigma$ 
of ${\rm NS}\, (S))$:
$$\sigma(h_1) = 10h_1 -h_2\,\, ,\,\, \sigma(h_2) = h_1\,\, .$$
It is straightforward to see that $\sigma$ is certainly an element of 
${\rm O}({\rm NS}\, (S))$ and preserves the positive cone of $S$, which is also an ample cone of $S$ by Lemma (\ref{rational curve}). Note also that $\sigma$ is of infinite order, because one of the eigenvalues is $5 + 4\sqrt{6} >1$.

Let $n$ be a positive integer such that $\sigma^n = id$ on the discriminant group $({\rm NS}\, (S))^{*}/{\rm NS}\, (S)$. Such an $n$ exists as $({\rm NS}\, (S))^{*}/{\rm NS}\, (S)$ is a finite set. Let $T(S)$ be the transcendental lattice of $S$, i.e., the orthogonal complement of ${\rm NS}\, (S))$ in $H^2(S, {\mathbf Z})$. Then, by \cite{Ni}, Proposition 1.6.1, the isometry $(\sigma^n, id_{T(S)})$ of ${\rm O}({\rm NS}\, (S)) \times {\rm O}(T(S))$ extends to an isometry $\tau$ of $H^2(S, {\mathbf Z})$. Since $\tau$ also 
preserves the Hodge decomposition and the ample cone, there is then an automorphism $g$ of $S$ such that $g^* = \tau$ by the global Torelli theorem for K3 surfaces (see eg. \cite{BHPV}, Chapter VIII). This $g$ satisfies the requirement.
\end{proof} 

Let $g$ be as in Lemma (\ref{autom}). Then the pair $(S \subset {\mathbf P}^3, g)$ satisfies all the requirements of Theorem (\ref{main})(1), (2). 

\begin{lemma}\label{num}
Let $(S \subset {\mathbf P}^3, g)$ be as in Theorem (\ref{main})(1),(2). 
Let $C \subset S$ be a non-zero effective curve of degree $< 16$, i.e., 
$$(C \cdot h)_S = (C \cdot H)_{{\mathbf P}^3} < 16\,\, .$$
Then $C = S \cap T$ for some hypersurface $T$ in ${\mathbf P}^3$. 
\end{lemma}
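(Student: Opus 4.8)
The plan is to reduce the statement to a purely numerical fact about ${\rm Pic}\,(S)$ and then to realize $C$ as a hypersurface section via projective normality of the embedding $\iota : S \hookrightarrow \mathbf{P}^3$.

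First I would write $[C] = a h_1 + b h_2$ in ${\rm Pic}\,(S)$ and extract two constraints. Since $h = h_1$ is ample (Lemma (\ref{veryample})) and $C$ is a non-zero effective curve, $(C \cdot h)_S > 0$, so together with the hypothesis we get $0 < (C \cdot h)_S < 16$; and by Lemma (\ref{rational curve}) we also have $(C^2)_S > 0$. The key observation is that in the coordinate $k := a + 5b$ both forms simplify, namely
$$(C \cdot h)_S = 4a + 20b = 4k\,\, ,\qquad (C^2)_S = 4(a^2 + 10ab + b^2) = 4(k^2 - 24b^2)\,\, .$$
The degree condition then reads $0 < 4k < 16$, so $k \in \{1, 2, 3\}$, while $(C^2)_S > 0$ forces $k^2 > 24 b^2$. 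As $k^2 \le 9 < 24$ for every admissible $k$, this is possible only when $b = 0$. Hence $[C] = k h_1 = k h$ with $k \in \{1, 2, 3\}$, and since numerical and linear equivalence coincide on a K3 surface, $C$ belongs to the complete linear system $\abs{kh}$.

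It remains to lift the section of $\mathcal{O}_S(kh)$ defining $C$ to the ambient space: I want the restriction map $H^0(\mathbf{P}^3, \mathcal{O}_{\mathbf{P}^3}(k)) \to H^0(S, \mathcal{O}_S(kh))$ to be surjective for $k \le 3$, for then a lift of this section cuts out a degree-$k$ hypersurface $T$ with $S \cap T = C$. This surjectivity is the step I expect to require the most care, and it is where the projective normality of the Saint-Donat embedding (\cite{SD}) enters, valid here because $h$ is very ample and $S$ carries no elliptic curve by Lemma (\ref{rational curve}). Should one prefer to argue by hand, Riemann--Roch on the K3 surface gives $h^0(\mathcal{O}_S(kh)) = 2 + 2k^2$, which equals $\dim H^0(\mathbf{P}^3, \mathcal{O}_{\mathbf{P}^3}(k))$ for $k = 1, 2, 3$ (namely $4, 10, 20$), while the kernel of the restriction map consists of degree-$k$ forms vanishing on the irreducible quartic $S$ and hence is zero for $k \le 3$; thus the map is an isomorphism, in particular surjective. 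Either way one obtains $C = S \cap T$, completing the proof.
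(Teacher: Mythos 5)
Your proposal is correct and follows essentially the same route as the paper: reduce to the numerical statement that $[C] = kh$ with $k \in \{1,2,3\}$ (your substitution $k = a+5b$ giving $(C^2)_S = 4(k^2 - 24b^2)$ is just a streamlined form of the paper's case-by-case analysis), and then lift the defining section via surjectivity of the restriction map $H^0(\mathbf{P}^3, \mathcal{O}_{\mathbf{P}^3}(k)) \to H^0(S, \mathcal{O}_S(kh))$. The only cosmetic difference is in justifying that surjectivity: the paper deduces it for all degrees at once from $H^1(\mathbf{P}^3, \mathcal{O}_{\mathbf{P}^3}(\ell)) = 0$ applied to the sequence $0 \to \mathcal{O}_{\mathbf{P}^3}(k-4) \to \mathcal{O}_{\mathbf{P}^3}(k) \to \mathcal{O}_S(k) \to 0$, whereas your dimension count (Riemann--Roch on $S$ plus injectivity for $k \le 3$) is a correct, slightly more laborious alternative.
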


\begin{proof} Recall that $h = h_1$ in ${\rm Pic}\, (S)$. 
There are $m, n \in {\mathbf Z}$ such that $C = mh_1 +nh_2$ 
in ${\rm Pic}\, (S)$. Then 
$$(C \cdot h)_S = 4(m + 5n) > 0\,\, ,\,\, (C^2)_S = 4(n^2 + 10mn + m^2) > 0\,\, .$$
Here the last inequality follows from Lemma (\ref{rational curve}). 
Thus, if $(C \cdot h)_S < 16$, then $m+5n$ is either $1$, $2$ or $3$ by $m, n \in {\mathbf Z}$. Hence we have either one of 
$$m = 1-5n\,\, ,\,, m = 2-5n\,\, ,\,\, m = 3-5n\, .$$
Substituting into $n^2 + 10mn + m^2 > 0$, we obtain one of either 
$$1 -24n^2 > 0\,\, ,\,\, 4-24n^2 >0\,\, ,\,\, 9-24n^2 >0\,\, .$$
Since $n \in {\mathbf Z}$, it follows that $n =0$ in each case. 
Therefore, in ${\rm Pic}\,(S)$, we have $C = mh$ for some 
$m \in {\mathbf Z}$. 
Since $H^1({\mathbf P}^3, {\mathcal O}_{{\mathbf P}^3}(\ell)) = 0$ for all $\ell \in {\mathbf Z}$, the natural restriction map 
$$\iota^* : H^0({\mathbf P}^3, {\mathcal O}_{{\mathbf P}^3}(m)) \rightarrow H^0(S, {\mathcal O}_S(m))$$
is surjective for all $m \in {\mathbf Z}$. This implies the result. 
\end{proof} 
\section{Proof of Theorem (\ref{main}) (3)}

In his paper \cite{Ta}, Theorem 2.3 and Remark 2.4, N. Takahashi proved the 
following remarkable theorem as a nice application of the log Sarkisov program (For terminologies, we refer to \cite{KMM}):

\begin{theorem}\label{logsarkisov}
Let $X$ be a Fano manifold of dimension $\ge 3$ with Picard number $1$, 
$S \in \vert -K_X \vert$ be a smooth hypersurface. Let $\Phi : X \cdots \to X'$ be a birational map to a ${\mathbf Q}$-factorial termial variety $X'$ with Picard number $1$, which is not an isomorphism, and $S' := \Phi_*S$. Then:

(1) If ${\rm Pic}\, (X) \rightarrow {\rm Pic}\, (S)$ is surjectve, then $K_{X'} + S'$ is ample. 

(2) Let $X = {\mathbf P}^3$ and $H$ be a hyperplane of ${\mathbf P}^3$. Note 
that then $S$ is a smooth quartic K3 surface. Assume that any irreducible reduced curve $C \subset S$ such that $(C \cdot H)_{{\mathbf P}^3} < 16$ is of the form $C = S \cap T$ for some hypersurface $T \subset {\mathbf P}^3$. Then $K_{X'} + S'$ is 
ample. 
\end{theorem}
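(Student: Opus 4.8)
The plan is to turn the ampleness of $K_{X'}+S'$ into a non-vanishing statement about discrepancies, and then to extract that non-vanishing from the log Sarkisov factorization of $\Phi$.

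\emph{Step 1: reduction to non-vanishing.} Since $S\in|-K_X|$ we have $K_X+S\sim 0$, and since $X$ is smooth and $S$ is a smooth irreducible divisor the pair $(X,S)$ is canonical, so $a(E;X,S)\ge 0$ for every divisor $E$ over $X$. I would choose a smooth projective model $W$ resolving $\Phi$, with birational morphisms $p\colon W\to X$ and $q\colon W\to X'$, $q=\Phi\circ p$, and let $S_W$ be the strict transform of $S$ (one first checks $S$ is not contracted by $\Phi$, so that $S':=\Phi_*S$ is a genuine divisor and $q_*S_W=S'$). Writing $K_W+S_W=p^*(K_X+S)+E_p$ with $E_p=\sum_F a(F;X,S)\,F$ over the $p$-exceptional primes, the divisor $E_p$ is effective, and $K_X+S\sim 0$ gives $K_W+S_W\sim E_p$. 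Pushing forward by $q$,
\[
K_{X'}+S' \;=\; q_*(K_W+S_W)\;\sim\; q_*E_p \;\ge\; 0 .
\]
As $X'$ is projective, $\mathbf{Q}$-factorial and of Picard number $1$, its effective cone is the closed ray spanned by an ample class, so any nonzero effective divisor is ample. Hence everything reduces to showing $q_*E_p\neq 0$: equivalently, that some $p$-exceptional divisor of positive $(X,S)$-discrepancy is not $q$-exceptional. Valuatively this says precisely that $\Phi$ fails to be crepant for the two Calabi--Yau pairs, i.e. that $\Phi$ not being an isomorphism forces $a(E;X,S)\neq a(E;X',S')$ for some $E$.

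\emph{Step 2: a maximal center from the log Sarkisov program.} I would view $X\to\mathrm{pt}$ and $X'\to\mathrm{pt}$ as Mori fiber spaces of Picard number $1$ and run the log Sarkisov program for $\Phi$ relative to the boundary $S$, factoring $\Phi$ into elementary links. Let $\mathcal{H}$ be the strict transform on $X$ of a complete ample linear system on $X'$ (say $|-mK_{X'}|$), and write $\mathcal{H}\sim-\mu K_X$; since $-K_X\sim S$ we have $\frac{1}{\mu}\mathcal{H}\equiv S$. By the Noether--Fano--Iskovskikh criterion, $\Phi$ not being an isomorphism forces the movable pair $(X,\frac{1}{\mu}\mathcal{H})$ to be non-canonical, producing a divisor $E$ with $a(E;X,\frac{1}{\mu}\mathcal{H})<0$ whose center is a maximal singularity and which the program realises as an honest divisor on the models occurring along the factorization, hence on $X'$.

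\emph{Step 3: transfer to the boundary, and the role of the hypotheses.} The remaining task is to convert ``$(X,\frac{1}{\mu}\mathcal{H})$ non-canonical'' into ``$q_*E_p\neq 0$'', i.e. to transfer the excess multiplicity of the movable system $\mathcal{H}$ onto the fixed boundary $S$. Since $\frac{1}{\mu}\mathcal{H}\equiv S$, the only way a maximal center could fail to raise $a(\,\cdot\,;X,S)$ is for the base locus of $\mathcal{H}$ to be absorbed along low-degree curves of $S$. This is exactly what the hypotheses exclude: under (1), every class on $S$ is restricted from $X$; under (2), every base curve of $\mathcal{H}|_S$ of degree $<16$ is a complete intersection $S\cap T$. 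In either case $\mathcal{H}|_S$ has sufficiently controlled base curves that the maximal singularity of $(X,\frac1\mu\mathcal{H})$ cannot be hidden inside $S$, and the excess discrepancy survives as a $q$-non-exceptional divisor $E$ with $a(E;X,S)>0$. This yields $q_*E_p\neq 0$, whence $K_{X'}+S'$ is ample.

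\emph{Main obstacle.} The hard part is Step 3: controlling the interaction between the Noether--Fano maximal center of the movable pair $(X,\frac1\mu\mathcal{H})$ and the fixed Calabi--Yau boundary $S$, and checking that the numerical threshold $16$ arising from the multiplicity estimates in the links is exactly strong enough for the curve condition to rule out absorption. Equally delicate is keeping the discrepancy bookkeeping compatible with $S$ throughout the whole chain of log Sarkisov links, rather than at a single birational model.
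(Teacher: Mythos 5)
First, a point of comparison: the paper does not prove Theorem (\ref{logsarkisov}) at all --- it is quoted verbatim from Takahashi \cite{Ta}, Theorem 2.3 and Remark 2.4 --- so your proposal must stand on its own as a reconstruction of Takahashi's Noether--Fano argument. Your Step 1 is correct and is indeed the right opening move: since $X$ is smooth and $S$ is a smooth member of $\vert -K_X\vert$, the pair $(X,S)$ is canonical, $K_{X'}+S'\sim q_*E_p$ is effective, and on a projective ${\mathbf Q}$-factorial variety of Picard number $1$ a nonzero effective divisor is ample (pair it with $A^{\dim X'-1}$ for an ample generator $A$). So the theorem is equivalent to excluding $K_{X'}+S'\equiv 0$, i.e.\ to showing $\Phi$ is not crepant for the two pairs. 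Step 2 (Noether--Fano producing a maximal singularity $E$ with $\mathrm{mult}_E\mathcal{H}>\mu\, a(E;X)$ from the hypothesis that $\Phi$ is not an isomorphism) is also the correct engine, and is the approach of \cite{Ta}.

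The genuine gap is Step 3, which is the entire mathematical content of the theorem, and your text replaces it by a restatement of the conclusion (``the maximal singularity cannot be hidden inside $S$, and the excess discrepancy survives'') rather than an argument. What is missing, concretely: (a) you never set up the contradiction structure that makes the hypotheses usable --- one must \emph{assume} $q_*E_p=0$, deduce via the negativity lemma on a common resolution that $a(E;X,S)=a(E;X',S')$ for \emph{all} divisors $E$ over $X$, and then play this crepancy against the maximal singularity; without it there is no reason the maximal center, which a priori is any point or curve of $X={\mathbf P}^3$, interacts with $S$ at all, and hypotheses (1)/(2), which speak only about curves \emph{on} $S$, cannot be brought to bear; (b) you never treat the case of a zero-dimensional maximal center, which your curve-based discussion ignores and which requires a separate argument; (c) the threshold $16$, which you explicitly leave unverified, is in fact a short computation you should have supplied: if the maximal center is a curve $C$, then $\mathrm{mult}_C\mathcal{H}>\mu$, and intersecting two general members of $\mathcal{H}\subset\vert{\mathcal O}_{{\mathbf P}^3}(4\mu)\vert$ gives
\[
16\mu^2=\deg(H_1\cdot H_2)\;\ge\;(\mathrm{mult}_C\mathcal{H})^2\deg C\;>\;\mu^2\deg C,
\]
hence $\deg C<16$, which is exactly where the complete-intersection hypothesis of (2) enters. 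As it stands, your Steps 1--2 are a sound reduction and your Step 3 is a plan, not a proof; the theorem would have to be completed along the lines of (a)--(c), or else cited from \cite{Ta} as the paper does. (A minor further inaccuracy: the claim in Step 2 that the program ``realises $E$ as an honest divisor on $X'$'' is neither needed nor true in general.)
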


Applying Theorem (\ref{logsarkisov})(2), we shall complete the proof of 
Theorem (\ref{main})(3) in the following slightly generalized form:

\begin{theorem}\label{gen}
Let $S \subset {\mathbf P}^3$ be a smooth quartic K3 surface. Then:

(1) Any automorphism $g$ of $S$ of infinite order is not the restriction of a 
biregular automorphism of the ambient space ${\mathbf P}^3$, i.e., the restriction of an element of ${\rm PGL}(\mathbf P^3)$. 

(2) Assume further that $S$ contains 
no curves of degree $< 16$ which are not cut out by a hypersurface. 
Then, any automorphism $g$ of $S$ of infinite order is not the restriction 
of a 
Cremona transformation of the ambient space ${\mathbf P}^3$.
\end{theorem}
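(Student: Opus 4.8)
The plan is to establish (1) directly and then deduce (2) from (1) together with Takahashi's Theorem (\ref{logsarkisov})(2).

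For (1), I would argue by contradiction: suppose an infinite-order $g \in \mathrm{Aut}(S)$ equals $\tilde g \vert S$ for some $\tilde g \in \mathrm{PGL}(\mathbf P^3)$. Since $\mathrm{Pic}(\mathbf P^3) = \mathbf Z H$ and $\tilde g$ carries hyperplanes to hyperplanes, $\tilde g^* H = H$, whence $g^*(h) = (\tilde g^* H)\vert S = h$ in $\mathrm{Pic}(S)$, where $h = H\vert S$ is ample. The crux is then the classical finiteness of the automorphism group of a polarized K3 surface: the stabilizer $\{ f \in \mathrm{Aut}(S) : f^* h = h \}$ is finite. Concretely, the representation $\mathrm{Aut}(S) \to \mathrm{O}(\mathrm{NS}(S))$ has finite kernel --- by the global Torelli theorem an automorphism acting trivially on $H^2(S, \mathbf Z)$ is the identity --- while the image of the stabilizer lies in the stabilizer of $h$ in $\mathrm{O}(\mathrm{NS}(S))$, a finite group because $\mathrm{NS}(S)$ has signature $(1, \rho - 1)$ and $h^2 > 0$, so that $h^{\perp}$ is negative definite. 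Hence $g$ would have finite order, a contradiction.

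For (2), I again argue by contradiction: suppose $g = \tilde g\vert S$ with $g$ of infinite order, for some $\tilde g \in \mathrm{Bir}(\mathbf P^3)$ satisfying $\tilde g_*(S) = S$. If $\tilde g$ is biregular, i.e. lies in $\mathrm{PGL}(\mathbf P^3)$, part (1) already gives a contradiction. So I may assume $\tilde g$ is not an isomorphism and apply Theorem (\ref{logsarkisov})(2) with $X = \mathbf P^3$, target $X' = \mathbf P^3$ (smooth, hence $\mathbf Q$-factorial terminal of Picard number $1$), $\Phi = \tilde g$, and $S' := \Phi_* S = \tilde g_*(S) = S$. The hypothesis of that theorem --- that every irreducible reduced curve $C \subset S$ with $(C \cdot H)_{\mathbf P^3} < 16$ is cut out on $S$ by a hypersurface --- is exactly the standing assumption of (2). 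The theorem then forces $K_{X'} + S'$ to be ample.

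The contradiction now comes from an elementary identity. Since $S' = S$ sits in $X' = \mathbf P^3$ as a quartic, $S' \in \vert 4H \vert = \vert -K_{\mathbf P^3}\vert$, so $K_{X'} + S' = -4H + 4H = 0$ in $\mathrm{Pic}(\mathbf P^3)$, which is not ample. Hence $\tilde g$ cannot fail to be biregular, and part (1) closes the argument. The only subtlety I would check carefully is that $g = \tilde g\vert S$ is a genuine biregular automorphism, so that $g^*$ acts on $\mathrm{Pic}(S)$ and part (1) applies: this holds because $S$ is a K3 surface, so any birational self-map preserving $S$ restricts to a birational --- hence, by uniqueness of minimal models of surfaces, biregular --- self-map of $S$. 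Granting Takahashi's theorem, the heart of the matter is thus the anticanonical identity $K_{\mathbf P^3} + S = 0$, which makes ``$K_{X'} + S'$ ample'' impossible once the log Sarkisov target is again the pair $(\mathbf P^3, S)$; the finiteness statement behind (1) is the only other essential ingredient.
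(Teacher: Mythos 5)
Your part (2) coincides with the paper's own argument: the same dichotomy (either $\tilde g$ is biregular or it is not), the same invocation of Theorem (\ref{logsarkisov})(2) with $X = X' = {\mathbf P}^3$, $\Phi = \tilde g$ and $S' = \tilde g_* S = S$, and the same contradiction of the resulting ampleness of $K_{X'}+S'$ with $K_{{\mathbf P}^3} + S = 0$. Where you genuinely diverge is part (1). The paper stays inside projective geometry: it considers $G = \{\tilde g \in {\rm PGL}({\mathbf P}^3) \mid \tilde g(S) = S\}$, identifies it with the stabilizer of $[S]$ under the ${\rm PGL}({\mathbf P}^3)$-action on the relevant component of the Hilbert scheme (hence a Zariski-closed subgroup with finitely many irreducible components), and then kills its dimension using the injection $G \to {\rm Aut}(S)$ together with $H^0(S, T_S) = 0$; so $G$ is finite and cannot contain an infinite-order element. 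You instead prove the intrinsic, lattice-theoretic statement that the polarized automorphism group $\{f \in {\rm Aut}(S) : f^* h = h\}$ is finite: the stabilizer of $h$ in ${\rm O}({\rm NS}(S))$ is finite because it injects into the orthogonal group of the negative-definite lattice $h^{\perp} \cap {\rm NS}(S)$, and the kernel of ${\rm Aut}(S) \to {\rm O}({\rm NS}(S))$ is finite. Both routes are valid; yours proves a slightly stronger fact (no automorphism of infinite order can fix \emph{any} ample class, which also explains why $g^*h \neq h$ in Theorem (\ref{main})(2)), while the paper's is more elementary and self-contained, needing only the Hilbert scheme and vanishing of vector fields rather than Torelli-type input.

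One caveat on your part (1): the parenthetical justification of the finite kernel is incomplete as written. Global Torelli gives injectivity of ${\rm Aut}(S) \to {\rm O}(H^2(S, {\mathbf Z}))$, but an automorphism acting trivially on ${\rm NS}(S)$ could a priori act nontrivially, even with infinite order, on the transcendental lattice $T(S)$, and your sentence does not rule this out. The standard way to close this is to observe that such an automorphism preserves the orthogonal splitting of $T(S)_{\mathbf R}$ into the positive-definite $2$-plane coming from $(H^{2,0} \oplus H^{0,2})$ and its negative-definite complement, so it lies in a compact group while preserving the lattice $T(S)$, hence has finite order on $T(S)$; combined with global Torelli this makes the kernel finite (it is in fact finite cyclic, by Nikulin). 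Since this is a classical and citable fact, the gap is cosmetic rather than structural, but as stated your one-line argument does not prove it.
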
 

Recalling Lemma (\ref{num}), we see that the pair 
$(S \subset {\mathbf P}^3, g)$ in 
Theorem (\ref{main})(1)(2) satisfies all the requirements of 
Theorem (\ref{gen})(2). So, Theorem (\ref{main})(3) follows from Theorem (\ref{gen}) (2). We prove Theorem (\ref{gen}). 

\begin{proof} Let us first show (1). Consider the group $G := \{g \in {\rm PGL}(\mathbf P^3)\, \vert\, g(S) = S\}$. Let $H$ be the connected component of 
${\rm Hilb}\,({\mathbf P}^3)$ containing $S$. Then $G$ 
is the stabilizer group of the point $[S] \in H$ 
under the natural action of ${\rm PGL}(\mathbf P^3)$ on 
$H$. In particular, $G$ is a Zariski closed subset of the affine variety ${\rm PGL}(\mathbf P^3)$. In particular, $G$ has only finitely many irreducible components. Note that the natural map $G \rightarrow {\rm Aut}\, (S)$ is injective and $H^0(S, T_S) = 0$. Thus $\dim\, G = 0$. Hence 
$G$ is a finite set. 

Let $g \in {\rm Aut}\, (S)$. If there is an element $\tilde{g} \in {\rm PGL}\,({\mathbf P}^3)$ such that $g = \tilde{g} \vert S$, then 
$g \in G$, and therefore $g$ is of finite order. This proves (1). 

Let us show (2). 
We argue by contradiction, 
i.e.,  {\it assuming to the contrary that there would be  
a birational map $\tilde{g} : {\mathbf P}^3 \cdots \to {\mathbf P}^3$ 
such that $\tilde{g}_*(S) = S$ and 
that $g = \tilde{g} \vert S$, we shall derive a contradiction.} 

We shall divide it into two cases:

(i) $\tilde{g}$ is an isomorphism, (ii) $\tilde{g}$ is not an isomorphism.

Case (i). By (1), $g$ would be of finite order, a contradiction. 

Case (ii). By the case assumption, our $S$ would satisfy 
all the conditions of Theorem (\ref{logsarkisov})(2). Recall also that $\tilde{g}_*S = S$. However, then, 
by Theorem (\ref{logsarkisov})(2), $K_{{\mathbf P}^3} + S$ would be ample, a contradiction to $K_{{\mathbf P}^3} + S = 0$ in ${\rm Pic}\, ({\mathbf P}^3)$. 

This completes the proof.
\end{proof}

\end{document}